\DeclareMathOperator{\Ext}{Ext}
\newcommand{\field}[1]{\mathbb{#1}}
\newcommand{\Z}{\field{Z}}
\newcommand{\e}{\epsilon}
\newcommand{\F}{\field{F}}
\newcommand{\N}{\field{N}}
\newcommand{\ep}{{ \varepsilon }}
\newtheorem {theorem}{Theorem}[section]
\newtheorem {corollary}[theorem]{Corollary}
\theoremstyle{definition}
\newtheorem {definition}[theorem]{Definition}
\theoremstyle{remark}
\numberwithin{equation}{section}
\begin{document}
\bibliographystyle{plain}
\title[Operations for Hopf Cohomology]{A Note on the Algebra of Operations for Hopf Cohomology at Odd Primes}
\author{Maurizio Brunetti}\author{Adriana Ciampella}\author{Luciano A. Lomonaco}
\keywords{Steenrod Algebra, Invariant Theory}
\address{\, \newline Dipartimento di Matematica e applicazioni,\newline Universit\`a di Napoli Federico II, \newline Piazzale Tecchio 80   I-80125 Napoli, Italy. 
\newline \newline E-mail: mbrunett@unina.it,  ciampell@unina.it, lomonaco@unina.it}
\begin{abstract} 
Let $p$ be any prime, and let  ${\mathcal B}(p)$ be the algebra of operations on the cohomology ring of any cocommutative $\F_p$-Hopf algebra. In this paper we show that when $p$ is odd (and unlike the $p=2$ case), ${\mathcal B}(p)$  cannot become an  object in the Singer category of  $\F_p$-algebras with coproducts, if we require that coproducts act on the generators of ${\mathcal B}(p)$ coherently with their nature of cohomology operations. \end{abstract}
\subjclass[2010]{55S10, 55T15}
\maketitle
%\section{Introduction} 
% $\mathcal A (p)$
%\begin{theorem}\label{th} aaaaaaaaa 
%\end{theorem}%%\cite{L}.
%\begin{equation}\label{xc} aaaaaa \\%aaaaaaaaa\\%\end{equation}%Equation \eqref{xc}  %\begin{equation}\label{ar}   % \begin{aligned}%PPP&=QQQQQ\\%PPP&=QQQQ \\%\end{aligned}%\end{equation}%\begin{proposition} blabla    %\end{proposition}%\begin{proof} xxxxx%\end{proof} %\begin{proposition} aaaaa.\\%(i) $2=2$.\\%(ii)} $3=3$. \\%(iii)4=4.\\%\end{proposition}%\begin{equation}M=%\begin{cases}i=2(p-1)k \\%a=b %\end{cases} %\end{equation}
\section{Introduction}
After noticing that the algebra $\mathcal B (p)$  of Steenrod operations on  $\Ext_{\Lambda}(\Z_{p}, \Z_{p})$, the cohomology of a
graded cocommutative Hopf algebra $\Lambda$ over $\F_{p}$,
 is (not even only for $p=2$) neither a Hopf algebra nor a bialgebra,   William B. Singer introduced in \cite{S} the notions, one dual to the other, of a $k$-{\em algebra with coproducts} and $k$-{\em coalgebra with products}, for any commutative ring $k$,  arguing that this is the right categorial setting to study $\mathcal B(2)$ and its dual.  
Further examples of $k$-algebras with coproducts appeared in literature in the last decade. For instance the third author studied in \cite{L3} those arising as invariants of finitely generated $\F_2$-polynomial algebras under the action of the general linear groups and their upper triangular subgroups.  

 More recently \cite{BCL1_5}, the authors have taken into account ${\mathcal B}(p)$, when $p$ is an odd prime. Such algebra  has been described in terms of generators and relations by Liulevicius in  \cite{Li}.
The important role of $\mathcal B(p)$ for stable homotopy computations is well and long established (see for example \cite{Br1}, \cite{Br2}, \cite{Br3}, \cite{Ka}, \cite{Li}). Furthermore a relevant subalgebra of $\mathcal B (p)$ is a quotient of the universal Steenrod algebra $\mathcal Q(p)$, introduced in \cite{kn:M} and broadly examined by the authors (\cite{BC}-\cite {CL2}, \cite{L1},\cite{L2}, \cite{L4}).
 Along the spirit of \cite{S}, in \cite{BCL1_5} the authors equipped  $\mathcal B(p)$ with a suitable collection of $\F_p$-linear mappings that made it the underlying set of an object in the Singer category of $\F_p$-algebras with coproducts.
 Yet, in \cite{BCL1_5}, the chosen coproduct acting on the Bockstein operator $\beta$ has little to do with its nature of cohomology operation. 
 
Our Theorem \ref{t2} states that $\mathcal B (p)$ does not admit a structure of $\F_{p}$-algebra with coproducts  consistent with \eqref{HH}, and hence with the geometric meaning of all its generators. 

A comparison between Theorems \ref{t1} and \ref{t2} shows that the non-primitivity of the Bockstein operator stands as unavoidable obstruction. This is an interesting phenomenon that deserves to be further investigated. In fact it suggests that Singer's notion of {\em algebra with coproducts} in \cite{S} needs to be refined, or perhaps that the algebra $\mathcal B (p)$  is a deformation of a certain geometrically significant (and yet-to-be-determined) algebraic object.

%The paper is organized as follows. Section 2 contains the presentation of $\mathcal B(p)$ as an algebra that goes back to \cite{Li} and the definition of the mentioned Singer categories. Not surprisingly there is a  significant overlap with Preliminaries in  \cite{BCL1_5}. Such background material has been included here to keep this paper reasonably self-contained. Section 2 also contains the description of its ``exotic" additional structure defined on $\mathcal B(p)$ in  \cite{BCL1_5}. The short Section 3 recalls the definition of  the product in the category of augmented $k$-algebras as given in \cite{T}. Finally  Section 4 is devoted to the proof of Theorem \ref{t2}, which is our main result.
 
 \section{A Theorem of Non-existence}
 Let $p$ be an odd prime. We recall that the algebra $\mathcal B (p)$ of Steenrod operations on the cohomology ring of any cocommutative Hopf $\F_p$-algebra $\Lambda$ is generated by
\begin{equation}\label{b1} 
P^k:\Ext_{\Lambda}^{q,t}(\Z_{p}, \Z_{p})\to \Ext_{\Lambda}^{q+2k(p-1), pt}(\Z_{p}, \Z_{p}) \quad (k,q,t \geq 0), \end{equation}
and
\begin{equation}\label{b2}
\beta: \Ext_{\Lambda}^{q,t}(\Z_{p}, \Z_{p})\to \Ext_{\Lambda}^{q+1, pt}(\Z_{p}, \Z_{p}) \quad \quad (q,t \geq 0) 
\end{equation}
subject to the following relations (see \cite{Li}):
\begin{equation}\label{ar1}    
\beta^2 =0, 
\end{equation}
\begin{equation}\label{ar2}  
 P^aP^b =\sum_{t=0}^{\lfloor \frac{a}{p} \rfloor}A(b,a,t)\; P^{a+b-t}P^t \quad \mbox{when $a<pb$,} 
 \end{equation}
\begin{equation}\label{ar3} 
%\bexgin{aligned}
    %               P^a \beta P^b&=\sum_{t=0}^{\lfloor \frac{a}{p} \rfloor}B(b,a,t)\; \beta P^{a+b-t}P^t\\&+\sum_{t=0}^{\lfloor \frac{a-1}{p} \rfloor}   A(b,a-1,t)\; P^{a+b-t}\beta P^t  \quad \mbox{when $a\leq pb$.}
%\end{aligned}
P^a \beta P^b=\sum_{t=0}^{\lfloor \frac{a}{p} \rfloor}B(b,a,t)\; \beta P^{a+b-t}P^t +\sum_{t=0}^{\lfloor \frac{a-1}{p} \rfloor}   A(b,a-1,t)\; P^{a+b-t}\beta P^t  \quad \mbox{when $a\leq pb$.}
\end{equation}
  Coefficients in the several sums of \eqref{ar2} and \eqref{ar3} read as follows:
  $$A(k,r,j)= (-1)^{r+j}{(p-1)(k-j)-1 \choose r-pj} $$ 
and $$ B(k,r,j) = (-1)^{r+j}{(p-1)(k-j) \choose r-pj}.$$ 

Unlike the element with the same name in the ordinary Steenrod algebra $\mathcal A_p$, $P^{0}$ in $\mathcal B (p)$ is not the identity. 
    
 We now recall the definition of {\em $k$-algebra with coproducts}.
 
 %and {\em $k$-coalgebra with products} as given in \cite{S}, here $k$ denotes any commutative ring.
\begin{definition}\label{def} A {\em $k$-algebra with coproducts} is a bigraded unital algebra $$\mathcal C= \{ \mathcal C_{n,s} \, | \, n,s \geq 0 \} $$ together with degree preserving maps $\e_s: \mathcal C_{*,s} \rightarrow k$ and $\psi_s : \mathcal C_{*,s} \rightarrow \mathcal C_{*,s} \otimes \mathcal C_{*,s}$ for each $s \geq 0$ such that:
\begin{enumerate} 
\item $\mathcal C_{*,s}$ is a graded coalgebra  with counit $\e_s$ and coproduct $\psi_s$, for each $s \geq 0$;
\item the algebra unit $\eta: k \rightarrow \mathcal C_{*,0}$ is a map of coalgebras;
 \item  the multiplication $\mu : \oplus_{h+k=s} (\mathcal C_{*,h} \otimes \mathcal C_{*,k}) \rightarrow \mathcal C_{*,s}$ preserves the coalgebra structure for each $s \geq 0$. 
 \end{enumerate} 
  \end{definition}
%\begin{definition}
 %A {\em $k$-coalgebra with products} is a bigraded coalgebra $$\mathcal K = \{ \mathcal K_{n,s} \, | \, n,s \geq 0 \} $$ together with degree preserving maps $\eta_s: k \rightarrow \mathcal K_{*,s} $ and $\mu_s : \mathcal K_{*,s} \otimes \mathcal K_{*,s} \rightarrow \mathcal K_{*,s}$ for each $s \geq 0$ such that:
%\begin{enumerate} 
%\item $\mathcal K_{*,s}$ is a graded algebra  with unit $\eta_s$ and product $\mu_s$, for each $s \geq 0$;
%\item the restriction of the counit $\e$ to $\mathcal K_{*,0}$ is a  map of algebras;
 %\item  the comultiplication $\psi :  \mathcal K_{*,s} \rightarrow \prod_{h+k=s} (\mathcal K_{*,h} \,\otimes \, \mathcal K_{*,k})$ preserves the algebra structure for each $s \geq 0$. 
 %\end{enumerate} 
 %\end{definition}
 As already noted in  \cite{BCL1_5}, Item (iii) of Definition \ref{def} makes sense: the category of graded coalgebras has tensor products and sums,
 and the category of graded algebras has tensor products and categorical products. 
Explicitly, given two graded algebras $A$ and $B$, on $A \otimes B$ we assume defined  the product
\begin{equation}\label{yyy} (a \otimes b) (c \otimes d) = (-1)^{(\deg b)(\deg c)} (ac \otimes bd). \end{equation} 
It follows in particular that $\mathcal D_{h,k} = \mathcal C_{*,h} \otimes \mathcal C_{*,k}$ is a coalgebra, and a comultiplication on  $\mathcal E_s = \oplus_{h+k=s} \mathcal D_{h,k} $ defined coordinatewise makes  $\mathcal E_s$ itself a coalgebra.

Note also that Item (iii) essentially says that each map in the family of maps $\{\psi_s \}$ is completely determined by `extending multiplicatively' the action on the elements of a generating set of $\mathcal C$.

% similarly, $\mathcal K_{*,h} \otimes \mathcal K_{*,k}$ is actually an algebra, and the coordinatewise multiplication makes  $$\mathcal L_s= \prod_{h+k=s} (\mathcal K_{*,h} \otimes \mathcal K_{*,k})$$  an algebra.

In  \cite{BCL1_5}, we proved the following Theorem.
\begin{theorem}\label{t1} Let $p$ be an odd prime. Once assigned the bidegree 
\begin{equation}\label{bd} |P^s|=(2s(p-1), 1), \quad \quad |\beta|=(1,0), \end{equation}
to its algebra generators,  $\mathcal B(p)$ admits a unique structure as a $\F_{p}$-algebra with coproducts, where 
 \begin{equation}\label{y} \psi_{0}(\beta)  =\beta\otimes 1 + 1 \otimes \beta  \quad \mbox{and} \quad \psi_{1}(P^s)=\sum_{i+j=s}P^i\otimes P^j\quad \quad (s\geq 0). \end{equation}    \end{theorem}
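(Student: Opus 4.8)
The plan is to separate the two assertions. Uniqueness is essentially formal once one unwinds Definition \ref{def}; the substance is existence, which I would reduce to the statement that the defining ideal of $\mathcal B(p)$ is a coideal for the comultiplication prescribed on generators by \eqref{y}. First, for uniqueness: axiom (ii) forces $\psi_0(1)=1\otimes1$ and $\e_0(1)=1$; the counit identity of axiom (i), applied to $\beta$ and to the $P^k$ and using that $\beta$ spans the degree-$1$ part of $\mathcal B(p)_{*,0}$ while the $P^k$ have pairwise distinct first degrees $2k(p-1)$ (hence are linearly independent), forces $\e_0(\beta)=0$, $\e_1(P^0)=1$ and $\e_1(P^k)=0$ for $k\geq 1$. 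Since $\mathcal B(p)$ is generated as an algebra by $\{\beta\}\cup\{P^k\}_{k\geq 0}$, axiom (iii) — which asks that $\mu$ be a morphism of coalgebras — then determines each $\psi_s$ and each $\e_s$ on every product of generators, hence on all of $\mathcal B(p)_{*,s}$. So a structure taking the values \eqref{y} on generators is unique if it exists.

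For existence I would build the structure on the free object first and then pass to the quotient. Let $\mathcal F$ be the free associative bigraded $\F_p$-algebra on generators $\beta$ and $P^k$ ($k\geq 0$), with bidegrees \eqref{bd}, and let $I\subseteq\mathcal F$ be the bihomogeneous two-sided ideal generated by the relators $r_1=\beta^2$, $r_2(a,b)=P^aP^b-\sum_t A(b,a,t)P^{a+b-t}P^t$ (for $a<pb$), and $r_3(a,b)=P^a\beta P^b-\sum_t B(b,a,t)\beta P^{a+b-t}P^t-\sum_t A(b,a-1,t)P^{a+b-t}\beta P^t$ (for $a\leq pb$), so that $\mathcal B(p)_{*,s}=\mathcal F_{*,s}/I_{*,s}$. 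Note that every relator preserves the number of $P$'s, which is exactly why the second grading descends to $\mathcal B(p)$. Declaring \eqref{y} on generators and extending through $\mu$ as dictated by axiom (iii), with the product \eqref{yyy} on the tensor factors, yields degree-preserving maps $\widetilde{\psi}_s\colon\mathcal F_{*,s}\to\mathcal F_{*,s}\otimes\mathcal F_{*,s}$ and $\widetilde{\e}_s$; verifying the coalgebra axioms and the morphism property of $\mu$ on $\mathcal F$ is routine, since they hold on the generators (note that $|P^k|$ is even, so $\widetilde{\psi}_1(P^k)$ is coassociative and counital on the nose) and both sides of each identity are compatible with $\mu$. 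It then remains to show that $I$ is a coideal for the family $\{\widetilde{\psi}_s\}$, i.e. $\widetilde{\psi}_s(I_{*,s})\subseteq I\otimes\mathcal F+\mathcal F\otimes I$ and $\widetilde{\e}(I)=0$; by the Leibniz property of $\widetilde{\psi}$ relative to $\mu$ this reduces to the three families of relators $r_1,r_2(a,b),r_3(a,b)$, i.e. to checking that the image of $\widetilde{\psi}(r_i)$ in $\mathcal B(p)\otimes\mathcal B(p)$ vanishes after Adem reduction in each tensor factor.

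The relator $r_1$ is immediate: $(\beta\otimes1+1\otimes\beta)^2=\beta^2\otimes1+\beta\otimes\beta-\beta\otimes\beta+1\otimes\beta^2$ dies modulo $I$, the two middle terms cancelling precisely because $|\beta|$ is odd in \eqref{yyy}. For $r_2(a,b)$ one computes $\widetilde{\psi}_2(P^aP^b)=\sum_{i,j}P^iP^j\otimes P^{a-i}P^{b-j}$, with all signs trivial since every $P^k$ has even first degree, and must match this, after Adem-reducing each factor via \eqref{ar2}, with $\sum_t A(b,a,t)\widetilde{\psi}_2(P^{a+b-t}P^t)$; this is a classical binomial identity — indeed a cleaner instance of the one underlying the Hopf structure of the mod-$p$ Steenrod algebra, since in $\mathcal B(p)$ no monomial collapses to shorter $P$-length. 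The step for $r_3(a,b)$ is where I expect the real work: $\widetilde{\psi}$ must be evaluated on $P^a\beta P^b$ with $\psi_0(\beta)$ interposed between two copies of $\psi_1$, so the signs of \eqref{yyy} are now genuinely present, and the result has to be matched with the image of the right-hand side of \eqref{ar3}; after Adem reduction this becomes an identity intertwining the coefficients $A(b,a-1,t)$ and $B(b,a,t)$, and this is the main obstacle. Once the three checks are in place, $\psi_s$ and $\e_s$ descend to $\mathcal B(p)_{*,s}$, inherit the coalgebra axioms and the compatibility of $\mu$ from $\mathcal F$, satisfy axiom (ii) by construction and \eqref{y} by definition, which completes the argument.
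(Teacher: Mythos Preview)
Your plan is correct: uniqueness follows formally from the axioms exactly as you describe, and for existence your reduction---build $\widetilde\psi_s$ on the free algebra $\mathcal F$ and check that each relator $r_1$, $r_2(a,b)$, $r_3(a,b)$ lands in $I\otimes\mathcal F+\mathcal F\otimes I$---is valid. The treatment of $r_1=\beta^2$ is right (the odd degree of $\beta$ in \eqref{yyy} is exactly what makes the cross terms cancel), and you are honest that the real content lies in the Adem compatibility for $r_2$ and especially $r_3$, which you identify but do not carry out.

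The paper's proof, carried out in \cite{BCL1_5} and only summarized here, takes a genuinely different route. Rather than verifying directly that $\widetilde\psi$ preserves the Adem relations, it constructs the structure on $\mathcal B(p)$ as the \emph{dual} of a suitable $\F_p$-coalgebra with products in Singer's sense, using the admissible monomial basis \eqref{g} of each $\mathcal B(p)_{*,s}$ to set up and control the duality. The payoff of passing to the dual is precisely that it sidesteps the binomial identities you flag as the main obstacle: one writes down products on an explicit dual object (where they are easy to check), and well-definedness of the $\psi_s$ then comes for free upon dualizing back. Your direct approach is more self-contained conceptually but leaves the hard combinatorial step open; the paper's approach trades that step for an explicit construction on the dual side together with the admissible-basis argument.
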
  
 The unique structure the statement referred to turned out to be the dual  of  a suitable  $\F_p$-{\em coalgebra with products} in the sense of \cite{S}. 
 
 The proof uses the fact that the  $\F_p$-vector space $\mathcal B(p)_{*,s}$ has a basis made by admissible monomials, i.e. monomials of type 
\begin{equation}\label{g} \beta^{\ep_0}\,P^{t_1}\,\beta^{\ep_1}\,P^{t_2} \, \cdots \, \beta^{\ep_{s-1}}\, P^{t_s}  \, \beta^{\ep_s} \end{equation}
where $\ep_i \in \{0,1 \}$, and $ t_j \geq pt_{j+1}+ \ep_j$  for $ 1 \leq j <s$ (see Proposition 3.14 in \cite{BCL1_5}).

At a careful examination, Theorem \ref{t1} cannot be viewed  as the odd  $p$-counterpart of Theorem 1.2 in \cite{S}.

 In fact the Bockstein operator $\beta$ acts on products in cohomology rings of Hopf algebras according to the formula
 $$ \beta (uv) = \beta (u) \cdot P^0 (v) + (-1)^{\vert u \rvert} P^0(u) \cdot \beta (v)$$
 (see Equation 3.2.5 in \cite{Li}). Consequently, a coproduct $\tilde{\psi}$ that would take such behaviour into account should satisfy
\begin{equation}\label{HH}  \tilde{\psi} (\beta) = \beta \otimes P^0 + P^0 \otimes \beta. \end{equation}
It is quite natural to ask whether $\mathcal B (p)$, after suitably regrading its generators, admits a structure of $\F_{p}$-algebra with coproducts  consistent with \eqref{HH}. 
Theorem \ref{t2} answers negatively to such question.
\begin{theorem}\label{t2} Let $p$ be an odd prime. There is no way to doubly filter $\mathcal B (p)$ in order to make it a $\F_{p}$-algebra with coproducts, if we require that coproducts are defined consistently with 
  \begin{equation}\label{ytz}  \beta \longmapsto \beta \otimes P^0 + P^0 \otimes \beta
  \end{equation}
  and
    \begin{equation}\label{ytz2} 
 P^s \longmapsto \sum_{i+j=s}P^i\otimes P^j\quad \quad (s\geq 0). \end{equation}  
  \end{theorem}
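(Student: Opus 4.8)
The strategy is to suppose, for contradiction, that some bigrading on $\mathcal B(p)$ makes it an $\F_p$-algebra with coproducts whose comultiplications $\psi_s$ satisfy \eqref{ytz} and \eqref{ytz2}. The first task is to pin down what the bidegrees of $\beta$ and the $P^s$ must be. Since $\psi$ must respect the bigrading, the target of $\psi(\beta)$ lives in $\mathcal C_{*,s}\otimes\mathcal C_{*,s}$ for the appropriate $s$; comparing \eqref{ytz} with this constraint forces $|\beta|$ and $|P^0|$ to lie in the same "second-grading" stratum, say $|P^0| = (0,m)$ and $|\beta| = (b, m)$ for some $m \geq 0$ and $b \geq 1$ (in \cite{BCL1_5}, Theorem \ref{t1}, one had $m=0$, $b=1$, but now $P^0$ is forced to share $\beta$'s second grading). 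Likewise \eqref{ytz2} forces $|P^i\otimes P^j|$ to be constant on $i+j=s$, so $|P^s| = (2s(p-1)c, ms)$ for some fixed $c$ after normalizing the topological grading; the essential point is that the second component of $|P^s|$ is $ms$, proportional to the "length" $s$.

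**Key steps.** With the bidegrees fixed, I would next exploit the multiplicativity forced by Item (iii) of Definition \ref{def}: the coproduct on any monomial is the product (in $\mathcal C\otimes\mathcal C$, using \eqref{yyy}) of the coproducts of its letters. So $\psi(P^a P^b)$ is computed by multiplying $\bigl(\sum P^i\otimes P^j\bigr)\bigl(\sum P^k\otimes P^\ell\bigr)$, and similarly $\psi(P^a\beta P^b)$ involves the middle factor $\beta\otimes P^0 + P^0\otimes P^0\beta$ — wait, $P^0\otimes\beta$. The heart of the argument is then to apply $\psi$ to \emph{both sides} of the Liulevicius relations \eqref{ar2} and \eqref{ar3} and compare. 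The relation \eqref{ar2} was already shown compatible with \eqref{ytz2} in \cite{BCL1_5}, so no obstruction there. The decisive relation is \eqref{ar3}: applying $\psi$ to $P^a\beta P^b$ via multiplicativity produces terms containing $P^0$ inserted in the middle — e.g. a summand like $P^i\beta P^k\otimes P^j P^0 P^\ell$ and $P^i P^0 P^k\otimes P^j\beta P^\ell$ — whereas applying $\psi$ term-by-term to the right-hand side of \eqref{ar3} produces only terms of the shapes $\beta P^\bullet P^\bullet\otimes P^\bullet P^\bullet$ and $P^\bullet\beta P^\bullet\otimes P^\bullet P^\bullet$ (and their mirror images), with \emph{no} spurious $P^0$ factor appearing on the side that already carries the $\beta$. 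Because $P^0$ is not the identity in $\mathcal B(p)$, these extra $P^0$-factors cannot be absorbed, and one gets a genuine discrepancy: the two sides of $\psi(\text{\eqref{ar3}})$ cannot be made equal in $\mathcal B(p)\otimes\mathcal B(p)$.

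**Making the discrepancy precise.** To turn this into a rigorous contradiction rather than a vague mismatch, I would specialize to the smallest nontrivial instance of \eqref{ar3} — likely $a=b=1$, where the relation reads $P^1\beta P^1 = B(1,1,0)\,\beta P^2 P^0 + A(1,0,0)\,P^2\beta P^0$ (with the binomial coefficients evaluated explicitly; note $\lfloor 1/p\rfloor = 0$ for odd $p$, so each sum has a single term), and expand $\psi$ of each side into the admissible-monomial basis of $\mathcal B(p)_{*,s}\otimes\mathcal B(p)_{*,s}$ described after Theorem \ref{t1}. On the left, multiplicativity gives $\psi(P^1)\cdot\psi(\beta)\cdot\psi(P^1)$, which when expanded and rewritten in admissible form contains a monomial in which a factor of $P^0$ multiplies $\beta$ on one tensor slot with a coefficient that cannot be cancelled. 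On the right, $\psi$ applied to $\beta P^2 P^0$ and $P^2\beta P^0$, expanded via multiplicativity and \eqref{ytz}, \eqref{ytz2}, never produces that monomial with a matching coefficient (the second-grading bookkeeping from the $ms$-count of the $P$'s further constrains which tensor-monomials can appear, sharpening the comparison). Equating coefficients yields a false identity in $\F_p$.

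**Main obstacle.** The routine part is the bidegree bookkeeping; the genuinely delicate step is the coefficient comparison in the expansion of $\psi(P^1\beta P^1)$, because reducing the non-admissible monomials produced by multiplicativity back to the admissible basis invokes the relations \eqref{ar2}–\eqref{ar3} recursively, and one must be careful that the offending $P^0$-term is not accidentally killed or cancelled in that reduction. I expect the cleanest route is to choose a specific $\F_p$-linear functional on $\mathcal B(p)_{*,s}\otimes\mathcal B(p)_{*,s}$ — for instance, the coefficient of a well-chosen admissible tensor-monomial of the form $(\text{monomial})\otimes(P^0\cdot\text{admissible})$ — and show it is nonzero on $\psi(\text{LHS})$ but forced to be zero on $\psi(\text{RHS})$, thereby isolating the contradiction without expanding everything. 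That a single such functional suffices is what makes the argument finite and is the crux of the whole proof.
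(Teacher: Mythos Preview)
You have overlooked a far simpler obstruction. The paper never touches relation \eqref{ar3}; it uses only \eqref{ar1}, namely $\beta^{2}=0$. Once one observes (as you do) that \eqref{ytz} forces $\beta$ and $P^{0}$ to lie in the same coalgebra $\mathcal C_{*,\bar s}$, multiplicativity of the coproduct gives
\[
\tilde\psi_{2\bar s}(\beta^{2})
=\tilde\psi_{\bar s}(\beta)\,\tilde\psi_{\bar s}(\beta)
=(\beta\otimes P^{0}+P^{0}\otimes\beta)^{2}
=\pm\,\beta P^{0}\otimes P^{0}\beta\ \pm\ P^{0}\beta\otimes\beta P^{0},
\]
the diagonal terms vanishing because $\beta^{2}=0$. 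Since $\beta P^{0}$ and $P^{0}\beta$ are distinct admissible monomials, hence linearly independent in $\mathcal B(p)$, the right-hand side is nonzero, while $\tilde\psi_{2\bar s}(0)=0$. That is the whole proof.

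Your proposed route via \eqref{ar3} is not obviously wrong, but it is incomplete: you never identify the linear functional that separates $\psi(\text{LHS})$ from $\psi(\text{RHS})$, and you yourself call that step ``the crux of the whole proof''. There is also a slip in your bidegree analysis: from $\psi(P^{s})=\sum_{i+j=s}P^{i}\otimes P^{j}$ one concludes that \emph{every} $P^{i}$ lies in the same $\mathcal C_{*,\bar s}$ as $P^{s}$ and $P^{0}$, so the second component of $|P^{s}|$ is the constant $\bar s$, not $ms$. In short, the relation $\beta^{2}=0$ already witnesses the failure, and there is no need to wade into the Adem-type relations at all.
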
  
  \begin{proof}
  We argue by contradiction. Suppose there exists an $\F_p$-algebra with coproducts 
  \begin{equation}\label{a1}
\Big( \tilde{\mathcal B} = \{ \tilde{\mathcal B}_{n,s} \, | \, n,s \geq 0 \}, \, \mu, \, \eta, \{ \e_s \, \vert \, s \geq 0 \},  \, \{ \tilde{\psi}_s \, \vert \, s \geq 0 \}  \Big), \end{equation}
where
$$   \bigcup \tilde{\mathcal B}_{n,s} = \mathcal B (p),$$ 
and the coproducts in $\{ \tilde{\psi}_s \, \vert \, s \geq 0 \} $ are consistent with \eqref{ytz} and  \eqref{ytz2}.

Definition \ref{def} in particular implies that $\psi_s ( \mathcal C_{*,s}) \subseteq \mathcal C_{*,s} \otimes \mathcal C_{*,s}$. By \eqref{ytz} and \eqref{ytz2} it follows that the Bockstein operator $\beta $ and the Steenrod powers $P^i$ ($i \geq 0)$ all belong to the same coalgebra $\mathcal C_{*,\bar{s}}$ for a suitable $\bar{s} \in \N_0$. 
Let $\bar{r}$ and $\bar{t}$ be the non-negative integers such that 
$$ \beta \in \mathcal C_{\bar{r},\bar{s}} \quad \text{and} \quad P^0 \in \mathcal C_{\bar{t},\bar{s}}.$$
By Item (iii) of Definition \ref{def} we get
 \begin{align*}
\tilde{\psi}_{2\bar{s}} (\beta^2) &=\tilde{\psi}_{\bar{s}} (\beta) \tilde{\psi}_{\bar{s}} (\beta)\\
       &=(\beta \otimes P^0 + P^0 \otimes \beta)(\beta \otimes P^0 + P^0 \otimes \beta).
\end{align*}
The latter equality comes from \eqref{ytz}. Recalling \eqref{yyy}  and the fact that $\beta^2=0$ by \eqref{ar1},  we obtain
\begin{equation}\label{fin} \tilde{\psi}_{2\bar{s}} (\beta^2)=\tilde{\psi}_{2\bar{s}} (0) = (-1)^{\bar{t}^2}\beta P^0 \otimes P^0 \beta + (-1)^{\bar{r}^2} P^0 \beta \otimes \beta P^0.
\end{equation}
Equation \eqref{fin} contradicts the $\F_p$-linearity of  $\tilde{\psi}_{2\bar{s}}$, in fact 
$\beta P^0$  and $P^0\beta$ are both non-zero in $\mathcal B(p)$ and non-proportional.\end{proof}

\section{Toward Further Investigation}
Theorem~\ref{t2} does not foil the attempt to provide proper subalgebras of $\mathcal B(p)$ with a structure  of an $\F_{p}$-algebra with coproducts, as the next Proposition shows.
\begin{corollary} Let $\mathcal P(p)$ be the subalgebra of $\mathcal B(p)$ generated by the set $\{ P^i \, \vert \, i \geq 0 \}$ of pure powers. Once assigned the bidegree 
\begin{equation}\label{bz} |P^s|=(2s(p-1), 1), \end{equation}
the algebra $\mathcal P(p)$ admits a unique structure as a $\F_{p}$-algebra with coproducts, where 
 \begin{equation}\label{zzz} \psi_{1}(P^s)=\sum_{i+j=s}P^i\otimes P^j\quad \quad (s\geq 0). \end{equation}   
\end{corollary}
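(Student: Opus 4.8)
The plan is to mimic, in the smaller setting of $\mathcal P(p)$, the two ingredients that drove Theorem~\ref{t1}: an explicit admissible basis and the observation that Item~(iii) of Definition~\ref{def} forces the coproduct to be the multiplicative extension of its values on generators. First I would assign the bidegree $|P^s|=(2s(p-1),1)$ as in \eqref{bz}, so that an admissible monomial $P^{t_1}P^{t_2}\cdots P^{t_s}$ (with $t_j\geq p\,t_{j+1}$ for $1\leq j<s$) lands in bidegree $\bigl(2(p-1)\sum t_j,\, s\bigr)$; this is the specialisation to $\varepsilon_i=0$ of \eqref{g}, and one checks from the relation \eqref{ar2} (with $\beta$ absent) that these admissible monomials span, and are linearly independent in, $\mathcal P(p)_{*,s}$. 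The key point is that $\mathcal P(p)$ is the subalgebra of $\mathcal B(p)$ on the $P^i$, and \eqref{ar2} never introduces a $\beta$, so $\mathcal P(p)$ is closed and has the advertised basis; in particular $\mathcal P(p)_{*,0}=\F_p$, generated by the unit.

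Next I would establish \emph{existence}. Since $\mathcal P(p)$ is a free graded-commutative-type quotient only through \eqref{ar2}, it suffices to verify that the assignment \eqref{zzz}, namely $\psi_1(P^s)=\sum_{i+j=s}P^i\otimes P^j$, is compatible with \eqref{ar2}; equivalently, that applying $\psi_1$ to both sides of $P^aP^b-\sum_t A(b,a,t)P^{a+b-t}P^t$ (for $a<pb$) yields $0$ in $\mathcal P(p)_{*,2}\otimes$-sense, i.e. in $\bigoplus_{h+k=2}\mathcal P(p)_{*,h}\otimes\mathcal P(p)_{*,k}$. This is precisely the Cartan-type coherence already known to hold for the total Steenrod power in this context, and it is the content (restricted to the $\beta$-free part) of the verification carried out in \cite{BCL1_5} for Theorem~\ref{t1}; I would simply invoke that computation, noting that the $\psi_0$ and the mixed $\psi$-values involving $\beta$ there play no role once we discard $\beta$. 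Granting this, one defines $\psi_s$ on $\mathcal P(p)_{*,s}$ by extending multiplicatively via $\mu$ as in Item~(iii) — consistent because the only relations are \eqref{ar2}, which we have just checked — and $\varepsilon_s$ by sending the unit to $1$ and all positive-degree admissibles to $0$. Coassociativity, counitality, and the fact that $\eta$ and $\mu$ are coalgebra maps then follow formally, since they hold on generators and all structure maps are multiplicative.

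For \emph{uniqueness}, I would argue that any $\F_p$-algebra-with-coproducts structure on $\mathcal P(p)$ whose coproduct is consistent with \eqref{ytz2} is forced. By the same argument used at the start of the proof of Theorem~\ref{t2}, \eqref{ytz2} places every $P^i$ in a single coalgebra summand $\mathcal P(p)_{*,\bar s}$; the bidegree convention \eqref{bz} pins down $\bar s=1$ (one cannot have $P^0$ and $P^1$ in $\mathcal P(p)_{*,\bar s}$ for $\bar s\neq 1$ without the multiplication $\mu$ on $\bigoplus_{h+k=s}\mathcal P(p)_{*,h}\otimes\mathcal P(p)_{*,k}$ failing to be surjective onto admissibles of filtration $s$). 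Once the filtration is fixed, Item~(iii) says $\psi_s$ is determined on all of $\mathcal P(p)_{*,s}$ by its values on generators, and those are prescribed by \eqref{ytz2}; the counit is likewise forced. The one genuine obstacle — and the step I expect to be most delicate — is verifying that the naive multiplicative extension is well defined, i.e. that \eqref{zzz} respects \eqref{ar2}; everything else is bookkeeping with the admissible basis. I would handle this either by the direct Adem-relation computation or, more economically, by restricting the already-proven coherence from \cite{BCL1_5}, observing that $\mathcal P(p)\hookrightarrow\mathcal B(p)$ is a map of bigraded algebras and the Theorem~\ref{t1} coproduct restricts to \eqref{zzz} on $\mathcal P(p)_{*,1}$ and preserves $\mathcal P(p)\otimes\mathcal P(p)$.
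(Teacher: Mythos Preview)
Your proposal is correct and follows essentially the same approach as the paper: both reduce to invoking Theorem~\ref{t1} and observing that, because the only relations in $\mathcal P(p)$ are the $\beta$-free Adem relations \eqref{ar2}, the coproduct structure on $\mathcal B(p)$ restricts to $\mathcal P(p)$. The paper's proof is a one-liner to this effect; your write-up unpacks the same idea with considerably more detail about the admissible basis and the mechanics of multiplicative extension, and your closing suggestion---restrict the Theorem~\ref{t1} coproduct along $\mathcal P(p)\hookrightarrow\mathcal B(p)$---is exactly the paper's argument. One minor point: your uniqueness discussion about pinning down $\bar s=1$ is unnecessary, since the statement already \emph{assigns} the bidegree \eqref{bz}; uniqueness here is only of the coproduct given that grading and \eqref{zzz}, and that follows immediately from Item~(iii).
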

\begin{proof} Once you input Theorem~\ref{t1}, the only relevant point is the absence of $\beta$ among the generating relations \eqref{ar2} in $\mathcal P(p)$.

\end{proof}
In \cite{BCL1_5}, the authors took into account the subalgebra $\mathcal C(p)$ of $\mathcal B(p)$ generated by the set $\{ P^i, \, \beta P^i \, \vert \, i \geq 0 \}$. There is a good reason to believe that 
 $\mathcal C(p)$ could be made an object in Singer's category with coproducts consistent with \eqref{ytz} and \eqref{ytz2}. In fact, assigned the bidegree 
 \begin{equation}\label{bjj} |P^s|=(2s(p-1), 1), \quad \quad |\beta P^s|=(2s(p-1)+1,2), \end{equation} 
 and set
 $$ \psi_2 (\beta P^0) = \beta P^0 \otimes P^0P^0 + P^0P^0 \otimes  \beta P^0,$$
 the map $\psi_4$ could be $\F_p$-linear since the element $\beta P^0\beta P^0=0$ would be mapped onto
 $( \beta P^0 \otimes P^0P^0 + P^0P^0 \otimes  \beta P^0)^2$ which can be proved to vanish by \eqref{ar3} and \eqref{yyy}. The proof that all relations are preserved will depend on the existence of an appropriate $\F_p$-coalgebra with products: the dual to the required structure on $\mathcal C(p)$.
  
\end{document}